\title{Coxeter polytopes and Benjamini--Schramm convergence}
\author{Jean Raimbault}
\address{Institut de Mathématiques de Marseille, UMR 7373, CNRS, Aix-Marseille Université}
\email{jean.raimbault@univ-amu.fr}
\DeclareFontFamily{U}{wncy}{}
\DeclareFontShape{U}{wncy}{m}{n}{<->wncyr10}{}
\DeclareSymbolFont{mcy}{U}{wncy}{m}{n}
\DeclareMathSymbol{\Sha}{\mathord}{mcy}{"58}
\newcommand{\sub}{\mathrm{Sub}}
\newcommand{\eps}{\varepsilon}
\newcommand{\pl}{\partial}
\newcommand{\bs}{\backslash}
\newcommand{\vol}{\operatorname{vol}}
\newcommand{\area}{\operatorname{Area}}
\newcommand{\interval}[4]{
  \ifthenelse{ \equal{#1}{o} } {\mathopen{]}} {\mathopen{[}}
  #2, #3
  \ifthenelse{ \equal{#4}{o} } {\mathclose{[}} {\mathclose{]}}
}
\newcommand{\PO}{\mathrm{PO}}
\newcommand{\PGL}{\mathrm{PGL}}
\newcommand{\isom}{\mathrm{Isom}}
\newcommand{\RR}{\mathbb R}
\newcommand{\HH}{\mathbb H}
\numberwithin{equation}{section}
\begin{document}

\newtheorem{theo}{Theorem}
\newtheorem{lem}[theo]{Lemma}
\newtheorem{prop}[theo]{Proposition}
\newtheorem{cor}[theo]{Corollary}
\newtheorem{question}{Question}

\begin{abstract}
  We observe that a large part of the volume of a hyperbolic polyhedron is taken by a tubular neighbourhood of its boundary, and use this to give a new proof for the finiteness of arithmetic maximal reflection groups following a recent work with M.~Fr\c aczyk and S.~Hurtado. We also study in more depth the case of polygons in the hyperbolic plane. 
\end{abstract}

\maketitle

Let $X$ be a space of constant curvature, that is either a hyperbolic space $\HH^d$, a Euclidean space $\RR^d$ or a sphere $\mathbb S^d$. An hyperplane in $X$ is a one-lower-dimensional complete totally geodesic subspace, and a polytope is a bounded (or in the case of $\HH^d$, finite-volume) region delimited by a finite number of hyperplanes. A polytope in $X$ is said to be Coxeter if the dihedral angles between its faces are each of the form $\pi/m$ for some $m \ge 2$. For Coxeter polytopes in $\RR^n$ or $\mathbb S^2$ there is a well-known, complete and very intelligible classification of Coxeter polytopes by Coxeter diagrams. On the other hand Coxeter polytopes in $\HH^d$ have a very different behaviour and are still quite mysterious. In the sequel we will thus be only concerned with $X = \HH^d$. We will study Coxeter polytopes from a metric viewpoint and establish some results about their shapes when the volume tends to infinity, especially when $d=2$. A general survey on Coxeter groups in hyperbolic space is given in \cite{Vinberg_survey}; a more recent one is \cite{Belolipetsky_survey} which focuses on arithmetic aspects. 

\medskip

Let $P$ be a Coxeter polytope in $\HH^d$, and let $\Gamma_P$ be the subgroup of $\isom(\HH^d)$ generated by reflections in the faces of $P$. This is a discrete subgroup acting on $X$ with fundamental domain $P$, by the Poincar\'e polyhedron theorem. Moreover, $P$ is identified with the $\HH^d$-orbifold given by $\Gamma_P \bs \HH^d$, by endowing each face of $P$ with the local orbifold structure given by its pointwise stabiliser (the group generated by reflexions in the maximal faces which contain it). For $R>0$ thee $R$-thin part of $P$ is given by
\[
P_{\le R} = \{ x \in P : \exists \gamma\in\Gamma_P,\, d(x, \gamma x) \le R/2 \}
\]
and it corresponds to the $R$-thin part of the orbifold. An easy exercise shows that $P_{\le R}$ is equal to the set of points in $P$ which are at distance at most $R/2$ from the boundary. The first result in this note is the following, which is essentially a consequence of the hyperbolic isoperimetric inequality as we prove in \ref{proof_thm1} below. 

\begin{theo} \label{main}
  For every $d \ge 2$ there exists a constant $C(d)$ such that for every Coxeter polytope $P$ of finite volume in $\HH^d$ we have
  \[
  \vol(P_{\le 2}) \ge C(d)\vol(P).
  \]
\end{theo}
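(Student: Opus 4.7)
The strategy is to consider
\[
g(t) := \vol\{x \in P : d(x, \partial P) \ge t\}, \qquad t \ge 0,
\]
and to prove that $g$ decays exponentially with rate at least $d-1$. Since by the ``easy exercise'' recalled above we have $\vol(P_{\le 2}) = \vol(P) - g(1)$, such decay yields the theorem with $C(d) = 1 - e^{-(d-1)}$. Observe that only the convexity and finite volume of $P$ will actually enter the argument, not that $P$ is Coxeter.

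Two ingredients must be combined. First, the coarea formula applied to the $1$-Lipschitz function $x \mapsto d(x, \partial P)$ shows that $g$ is absolutely continuous and that $-g'(t)$ equals, for almost every $t$, the $(d-1)$-volume of $S_t := \{x \in P : d(x, \partial P) = t\}$. Since the set $A_t := \{x \in P : d(x, \partial P) \ge t\}$ lies in the interior of $P$, the set $S_t$ is precisely the topological boundary of $A_t$ in $\HH^d$. Second, the linear isoperimetric inequality in $\HH^d$ asserts that $\vol_{d-1}(\partial A) \ge (d-1) \vol_d(A)$ for every finite-volume measurable set $A \subset \HH^d$; this is a rephrasing of the well-known fact that the Cheeger constant of $\HH^d$ equals $d-1$. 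Putting the two together gives the differential inequality $-g'(t) \ge (d-1) g(t)$, which integrates to $g(t) \le g(0) e^{-(d-1) t}$; evaluating at $t = 1$ finishes the proof.

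The main technical point to worry about is the application of the isoperimetric inequality to $A_t$ when $P$ has cusps, so that $A_t$ is noncompact. Here it is helpful to observe that $A_t$ is itself convex: if $P = \bigcap_i H_i$ as an intersection of half-spaces, then $A_t = \bigcap_i H_i^{(t)}$, where $H_i^{(t)}$ denotes $H_i$ pushed inward by $t$. Moreover $A_t$ inherits from $P$ its finite-volume cusp structure, with horospherical cross-sections shrinking exponentially along the cusp direction. An exhaustion argument by truncation against a large ball $B(x_0, R)$ then handles the noncompactness: one applies the inequality to each compact truncation and observes that the artificial boundary piece $A_t \cap \partial B(x_0, R)$ has $(d-1)$-volume tending to $0$ as $R \to \infty$, recovering the inequality for $A_t$ itself in the limit. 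Once this is in hand the rest is a routine Gronwall computation.
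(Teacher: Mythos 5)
Your proof is correct, and it takes a genuinely different route from the paper's. The paper also reduces the theorem to showing that $U=\{x\in P: d(x,\pl P)\le 1\}$ occupies a definite fraction of $\vol(P)$, but it does so by a single application of the isoperimetric inequality at level $t=1$ (giving $\vol_d(P')\le\vol_{d-1}(\pl P')$), combined with a collar construction: the normal exponential map off $\pl P'$ parametrises a subset of $U$ of volume at least $\eps(d)\vol_{d-1}(\pl P')$, where $\eps(d)$ is a lower bound on the Jacobian obtained by a compactness/equivariance argument. You instead apply the isoperimetric inequality at \emph{every} level $t$ of the distance function and convert it, via the coarea formula (using $|\nabla d(\cdot,\pl P)|=1$ a.e.\ on $\{d>0\}$), into the differential inequality $-g'(t)\ge (d-1)g(t)$, which integrates to an exponential decay. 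What your approach buys is the elimination of the Jacobian estimate entirely and an explicit constant $C(d)=1-e^{-(d-1)}$ in place of the paper's non-explicit $(1+\eps(d)^{-1})^{-1}$; what it costs is slightly more measure-theoretic care (rectifiability of $\pl A_t$, which your convexity observation supplies, and the truncation argument for cusped $A_t$, where the cleanest justification that the artificial spherical boundary piece vanishes is that $\int_0^\infty \vol_{d-1}(A_t\cap\pl B(x_0,R))\,dR=\vol(A_t)<\infty$ forces a subsequence $R_j\to\infty$ along which that term tends to $0$). One small caveat: your remark that the Coxeter condition never enters is true for the inequality $\vol(U)\ge C(d)\vol(P)$, but the identification of $U$ with (a subset of) the thin part $P_{\le 2}$ does use that the reflections in the faces of $P$ lie in $\Gamma_P$, so the statement as formulated about $P_{\le 2}$ still requires $P$ to be a reflection domain.
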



In a joint work with M.~Fr\c aczyk and S.~Hurtado \cite[Theorem D]{aritmargulis} it was proven that $\vol(M_{\le R}) = o(\vol M)$ uniformly for $M$ a congruence arithmetic orbifold quotient of a given symmetric space. This together with Theorem \ref{main} immediately implies the following result, which was originally proved by Nikulin and Agol--Belolipetsky--Storm--Whyte (\cite{nikulin,ABSW} respectively). In fact the inspiration for this note was provided by a recent work of Fisher--Hurtado \cite{fisher_hurtado} where they use a lower-level part of \cite{aritmargulis}\footnote{Namely the ``arithmetic Margulis lemma'', Theorem 3.1 in loc. cit. which is an essential ingredient in the proof of Theorem D in loc. cit.} to give a new proof of Nikulin and Agol--Belolipetsky--Storm--Whyte's result. 

\begin{cor}
  For any $d$ there are at most finitely many congruence reflection groups in $\PO(d, 1)$. 
\end{cor}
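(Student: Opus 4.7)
The plan is to derive a contradiction from the asymptotic behaviour of $\vol(P_{\le 2})/\vol(P)$ along an infinite family, using only the two inputs recalled just above. Suppose, for the sake of contradiction, that $(\Gamma_n)_{n\ge 1}$ is an infinite sequence of pairwise non-conjugate congruence reflection subgroups of $\PO(d,1)$, with associated Coxeter polytopes $P_n$. Since congruence arithmetic lattices in $\PO(d,1)$ of bounded covolume form finite sets (by Wang's theorem for $d \ge 3$, and by the analogous statement for congruence arithmetic Fuchsian groups when $d=2$), after passing to a subsequence we may assume $\vol(P_n) \to \infty$.

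The orbifold $M_n = \Gamma_n \bs \HH^d$ is congruence arithmetic and is identified, as recalled in the introduction, with $P_n$ equipped with its reflection-orbifold structure. Theorem D of \cite{aritmargulis} therefore applies and yields
\[
\vol((P_n)_{\le 2}) = o(\vol(P_n))
\]
as $n \to \infty$. This is incompatible with the uniform lower bound
\[
\vol((P_n)_{\le 2}) \ge C(d)\vol(P_n)
\]
from Theorem \ref{main}, whose constant $C(d) > 0$ does not depend on $n$. The contradiction forces the collection of (conjugacy classes of) congruence reflection groups in $\PO(d,1)$ to be finite.

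The substance of the argument is packaged entirely in the two cited theorems; what remains to check is essentially bookkeeping. The one point worth verifying carefully is that the definition of the thin part $P_{\le R}$ used here coincides, under the identification $P \cong \Gamma_P \bs \HH^d$, with the $R$-thin part of the orbifold appearing in Theorem D of \cite{aritmargulis}. This is built into the definition given above (the displacement condition $d(x,\gamma x) \le R/2$ on $P$ is literally the injectivity-radius condition on the orbifold), so the main---and, as far as this deduction is concerned, only substantial---obstacle is upstream: producing Theorem \ref{main} itself, and the uniform $o(\vol M)$ bound of \cite{aritmargulis}. Once those are in hand the corollary drops out by a direct comparison of the two estimates.
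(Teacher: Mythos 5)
Your proof is correct and follows exactly the route the paper intends: the corollary is stated there as an immediate consequence of Theorem~\ref{main} combined with Theorem~D of \cite{aritmargulis}, which is precisely the comparison you carry out. The one step you make explicit that the paper leaves implicit --- reducing to the case $\vol(P_n)\to\infty$ via finiteness of congruence arithmetic lattices of bounded covolume --- is a legitimate and necessary piece of the ``immediately implies''.
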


Note that this is slightly different from the main results stated in \cite{nikulin,ABSW}. The difference is that they prove finiteness of {\em maximal reflective} arithmetic lattices (that is, arithmetic lattices wich are also generated by reflections in the sides of a Coxeter polytope, and are maximal with respect to this property), and these may be non-congruence (see \cite{lakeland}). However a short argument due to Vinberg, and used by Agol--Belolipetsky--Storm--Whyte, shows that an arithmetic maximal reflection group has uniformly bounded index in its congruence closure (see \cite[Lemma 6.2]{ABSW}); hence we can also deduce their original result from our proof.

\begin{cor}
  For any $d$ there are at most finitely many arithmetic maximal reflection groups in $\PO(d, 1)$. 
\end{cor}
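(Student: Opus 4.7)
The plan is to deduce this corollary from the previous one via Vinberg's uniform bound on the index of an arithmetic maximal reflection group in its congruence closure. Let $\Gamma \subset \PO(d,1)$ be such a group, with Coxeter fundamental domain $P$, and let $\tilde\Gamma$ denote its congruence closure; by \cite[Lemma 6.2]{ABSW} there is a constant $N(d)$, depending only on $d$, such that $[\tilde\Gamma:\Gamma] \le N(d)$. Writing $M = \tilde\Gamma \bs \HH^d$, we obtain an orbifold covering $\pi \colon P \to M$ of degree at most $N(d)$, with $M$ a congruence arithmetic quotient.

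The first step is to transfer the lower bound of Theorem \ref{main} from $P$ to $M$. Since $\Gamma \subset \tilde\Gamma$, any $\gamma \in \Gamma$ witnessing $x \in P_{\le 2}$ also witnesses $\pi(x) \in M_{\le 2}$, so $P_{\le 2} \subset \pi^{-1}(M_{\le 2})$ and therefore $\vol(P_{\le 2}) \le N(d)\vol(M_{\le 2})$. Combined with Theorem \ref{main} and the identity $\vol(P) = [\tilde\Gamma:\Gamma]\vol(M) \ge \vol(M)$, this gives
\[
\vol(M_{\le 2}) \ge \frac{1}{N(d)}\vol(P_{\le 2}) \ge \frac{C(d)}{N(d)}\vol(P) \ge \frac{C(d)}{N(d)}\vol(M).
\]

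The second step is to apply \cite[Theorem D]{aritmargulis}: along any sequence of congruence arithmetic quotients of $\HH^d$ with volume tending to infinity, the ratio $\vol(M_{\le 2})/\vol(M)$ tends to zero uniformly. Comparing with the bound above forces $\vol(M)$, and hence $\vol(P) \le N(d)\vol(M)$, to be uniformly bounded in terms of $d$ alone. Wang's finiteness theorem then yields finitely many candidates for $\tilde\Gamma$ up to conjugation in $\PO(d,1)$, and since each $\tilde\Gamma$ has only finitely many subgroups of index at most $N(d)$, only finitely many possibilities remain for $\Gamma$.

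The only ingredient I borrow as a black box beyond what was used for the first corollary is Vinberg's uniform index bound, which is precisely what permits the reduction from maximal reflective arithmetic lattices (which, as recalled via \cite{lakeland}, need not themselves be congruence) to the congruence setting. The main conceptual obstacle is therefore this reduction; the remaining work is a straightforward repackaging of the proof of the previous corollary, with the only care being the comparison of thin parts along the finite orbifold cover $\pi$, which is immediate from the inclusion $\Gamma \subset \tilde\Gamma$.
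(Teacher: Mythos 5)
Your proof is correct and follows essentially the same route the paper sketches: invoke Vinberg's uniform index bound from \cite[Lemma 6.2]{ABSW} to pass to the congruence closure, transfer the thin-part estimate of Theorem \ref{main} across the bounded-degree cover, and conclude via \cite[Theorem D]{aritmargulis} that the covolume is bounded. The only cosmetic caveat is that for $d=2,3$ the final finiteness step should be attributed to Borel's theorem on arithmetic lattices of bounded covolume rather than Wang's theorem (which excludes $\PO(2,1)$ and $\PO(3,1)$), but this does not affect the argument.
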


It is also well-known that cofinite reflection groups cannot exist in large dimensions \cite{Vinberg_35,Khovanskii}, so we may as well say that there are only finitely many congruence (or maximal arithmetic) hyperbolic reflection groups. 

\medskip

In the case of polygons in $\HH^2$ we can say more. Let $G = \isom(\HH^d) = \mathrm{PO}(d, 1)$, let $\mu_P$ be the $G$-invariant measure on the Chabauty space $\sub_G$ of $G$ supported on the conjugacy class of $\Gamma_P$. We will use the notion of Benjamini--Schramm convergence introduced in \cite[Sections 2-3]{7s}; this is the notion of convergence induced by the topology of weak convergence of measures on $\sub_G$. In this language Theorem \ref{main} implies that the trivial subgroup is not a limit point of the measures $\mu_P$. When $d=2$ we can prove the following result, which is much more precise than Theorem \ref{main}. 

\begin{theo} \label{BSconv-polygons}
  If $\mu$ belongs to the closure (in topology of weak convergence of measures on $\sub_G$) of the set of all $\mu_P$ for $P$ a finite-volume Coxeter polygon in $\HH^2$ then $\mu$-almost every subgroup is non-trivial and generated by reflections. 
\end{theo}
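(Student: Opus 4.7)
The plan is to prove the two assertions separately: (a) $\mu$-almost every subgroup is non-trivial (in fact contains at least one reflection), and (b) $\mu$-almost every subgroup is topologically generated by its reflections. Part (a) rests on a scale-refined version of Theorem \ref{main} together with the portmanteau theorem; part (b) relies on a classification of the possible Chabauty limits of conjugates of Coxeter lattices in $\PO(2,1)$, and this is where the main difficulty lies.

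For (a), I would first strengthen Theorem \ref{main} in dimension two to the bound $\vol(P_{\le 2R})/\vol(P) \ge 1 - Ce^{-R}$ uniformly over finite-volume Coxeter polygons of sufficiently large area. Since $P$ is convex, the inner set $S_R = \{x \in P : d(x, \pl P) > R\}$ is convex and its $R$-neighbourhood is contained in $P$; the hyperbolic collar formula together with the isoperimetric inequality $L^2 \ge 4\pi A + A^2$ in $\HH^2$ yields $\vol(N_R(S_R)) \gtrsim e^R \vol(S_R)$ once $\vol(S_R)$ is sufficiently large, hence $\vol(S_R) \lesssim e^{-R}\vol(P)$; the bounded-volume regime is handled separately using compactness of the corresponding moduli. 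For each $R > 0$ the set $W_R = \{H \in \sub_G : H \cap \mathfrak R_R \ne \emptyset\}$, with $\mathfrak R_R$ the compact set of reflections of $G$ whose wall lies within distance $R/2$ of the basepoint $o$, is closed in the Chabauty topology. After normalising $\mu_P$ to a probability measure the strengthened bound reads $\mu_P(W_R) \ge 1 - Ce^{-R}$, and portmanteau for closed sets gives $\mu(W_R) \ge 1 - Ce^{-R}$. Taking $R \to \infty$ and using $\bigcup_R W_R = \{H : H \text{ contains a reflection}\}$ shows that $\mu$-almost every $H$ contains a reflection, and in particular is non-trivial.

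For (b), I would classify the Chabauty limits of $H_n = g_n \Gamma_{P_n} g_n^{-1}$ according to the geometric behaviour of the basepoints $x_n = g_n \cdot o$ in the orbifolds $M_n = \Gamma_{P_n} \bs \HH^2$: if $x_n$ stays in a fixed compact region of $M_n$ then $H$ is a Chabauty limit of Fuchsian reflection groups and is itself reflection-generated; if $x_n$ approaches a wall, a vertex, or a cusp of $M_n$ then $H$ is the associated stabiliser-type subgroup --- respectively $\ZZ/2$, a finite dihedral group $D_m$ (or its continuous degeneration $\Og(2)$ when the vertex angles $\pi/m$ collapse), or an infinite dihedral group $\ZZ/2 * \ZZ/2$ at a cusp --- each of which is topologically generated by reflections; finally if $x_n$ escapes to the thick part of $M_n$ with injectivity radius tending to infinity then $H$ is trivial, a case already ruled out by part (a). The key low-dimensional input is that every element of $G = \PO(2,1)$ is a product of at most three reflections, with positions determined by its type: in a Coxeter approximant $H_n$ the two reflections in the decomposition of an elliptic or parabolic element are themselves elements of $H_n$ (they are the side reflections of the dihedral vertex- or cusp-stabiliser), and extracting Chabauty limits of these gives reflections in $H$ whose product is the given element. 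The main obstacle is the hyperbolic case: a translation $t \in H$ typically arises as a limit of translations $t_n \in H_n$ realised as long words in the side reflections, so the short two-reflection decomposition perpendicular to the axis is not directly inherited; to handle this one has to show, using the Margulis lemma in $\HH^2$ together with the closedness of the reflection locus in $G$, that the limit $H$ falls into the trichotomy above, and that each non-trivial type of limit possesses enough reflections (in walls perpendicular to the axis of $t$) to realise the required decomposition in the closure $\overline{\langle \mathcal R(H) \rangle}$.
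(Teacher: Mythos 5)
Your proof splits into the same two halves as the paper (non-triviality, then reflection-generation), but the two halves fare very differently.

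Part (a) is correct and is in fact a genuinely different and more economical route than the paper's. The paper proves the key estimate $\lim_R\lim_n \area((P_n)_{\ge R})/\area(P_n)=0$ by a combinatorial argument: it first removes short edges (Lemma \ref{remove_small_edges}), triangulates the polygon so that the dual tree has logarithmic radius, bounds triangle areas from below via the hyperbolic Heron formula (Lemma \ref{nosmallarea}), and uses the $\log 3$-hyperbolicity of $\HH^2$ to push points in most triangles to the boundary. Your observation that the inner parallel body $T_R=\{x: B(x,R)\subset P\}$ is convex (all angles of a Coxeter polygon are $\le\pi/2$, so $P$ is an intersection of half-planes and $T_R$ is an intersection of convex hypercycle-bounded regions), that $N_R(T_R)\subset P$, and that the hyperbolic Steiner/Gauss--Bonnet formula gives $\area(N_R(K))\ge\area(K)\cosh R$ for convex $K$, yields $\area(P_{\ge R})\le 2e^{-R/2}\area(P)$ in one stroke, with no need for the "sufficiently large area'' caveat or the compactness-of-moduli digression (which, as stated, would be delicate anyway, since bounded-area Coxeter polygons do not form a compact family). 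The portmanteau step with the closed sets $W_R$ is the standard mechanism and is fine. This argument even gives a stronger conclusion (almost every limit group contains a reflection) and works for arbitrary convex finite-area polygons.

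Part (b) has a genuine gap, and it is exactly at the point you flag as "the main obstacle''. The classification of limits by the behaviour of the basepoints is not a proof strategy here: in the main case, where $x_n$ stays in a region of bounded injectivity radius and the limit $H$ is a non-elementary discrete group, the assertion "then $H$ is a Chabauty limit of Fuchsian reflection groups and is itself reflection-generated'' is precisely the statement to be proven, and the two-reflection decompositions of elliptic and parabolic elements do not help for hyperbolic elements, which arise as limits of long words $s_{i_1,n}\cdots s_{i_k,n}$ whose individual letters could a priori escape to infinity or degenerate. The paper closes this gap with Lemma \ref{gens_minlength}: if $s$ occurs in a \emph{minimal} expression for $w\in\Gamma_P$ in the side reflections, then $d(x,sx)\le d(x,wx)$ for every $x$ (proved by a projection/billiard argument using the walls). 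Combined with a packing bound on the word length of elements with $d(o,g_no)\le R$, this shows every short element of $\Gamma_n$ is a product of boundedly many reflections each displacing the common thick basepoint $o$ by at most $R$; these reflections then subconverge to reflections in $H$ whose product is the given limit element. Without this (or an equivalent) quantitative control on the letters of a minimal word, your trichotomy does not yield the conclusion. You would also need to invoke, as the paper does, that weak limits of discrete invariant random subgroups are supported on discrete subgroups, so that the degenerate continuous limits such as $\Og(2)$ in your list are $\mu$-null rather than cases to be accommodated by passing to topological generation.
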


This seems likely to be true in higher dimensions as well, though our very elementary proof does not seem to immediately extend to this setting. Finally, note that any subgroup of $\isom(\HH^d)$ generated by reflections is in fact generated by the reflections in the side of a Coxeter polytope (possibly with infinitely many faces), a well-known fact\footnote{This is stated at the beginning of \cite{Vinberg_survey}, and the proof is more or less obvious. } 

\subsection*{Organisation}

The proof of Theorem \ref{main} is very short and given in Section \ref{proof_thm1}. The rest of the article is dedicated to the proof of Theorem \ref{BSconv-polygons}; first we collect a few useful facts on the geometry of hyperbolic Coxeter polygons in Section \ref{facts_polygons}, and use them in Section \ref{proof_nontrivial} to prove that a Benjamini--Schramm limit of Coxeter polygons is almost surely non-trivial. Independently, we prove in Section \ref{proof_reflection} that the set of groups generated by reflections is closed in the Chabauty topology and deduce that a Benjamini--Schramm limit of Coxeter polygons is almost surely generated by reflections. 

\subsection*{Acknowledgments} Research supported by ANR grant AGDE - ANR-20-CE40-0010-01. I am grateful to David Fisher for comments on earlier versions of the paper. 

%


\section{Proof of Theorem \ref{main}} \label{proof_thm1}

Fix $d \ge 2$. Let $P$ be a Coxeter polytope in $\HH^d$. We will denote by $F_i, i \in I$ the $(d-1)$-faces of $P$. 

Now let
\[
U = \{ x \in P :\: d(x, \pl P) \le 1\}
\]
and 
\[
P' = \{ x \in P :\: d(x, \pl P) \ge 1\}.
\]
We have $U \subset P_{\le 2}$ since every point of $U$ is moved by at most 2 by a reflection in a face of $P$ (as previously noted, it is clear that in fact $U = P_{\le 2}$). 

For $x \in F_i$ let $\nu_x$ be the vector normal to $F_i$ pointing inside $P$; note that $d(F_i, \exp_x(\nu_x)) = 1$ for all $x \in F_i$. Let $W_i \subset F_i$ defined by
\[
W_i = \{ x \in F_i : \exp_x(\nu_x) \in P', \, \forall \, j \not= i\, d(F_j, \exp_x(\nu_x)) > 1\}
\]
and let
\[
F_i' = \{ \exp_x(\nu_x) :\: x \in W_i \}
\]
Then the $F_i'$ are disjoint open subsets of $\pl P'$ and their complement $S = \pl P' \setminus \bigcup_{i \in I} F_i'$ is of measure 0 (with respect to the $(d-1)$-dimensional measure on $\partial P'$) as it is equal to the set of points in $\pl P'$ at distance 1 from at least two of the $F_i$. 

For $y \in F_i'$ let $\nu_y'$ be the vector orthogonal to $F_i'$ pointing outside $P'$; then the map
\begin{equation} \label{expmap}
E : [0, 1] \times \pl P' \setminus S \to \HH^d, \: (t, y) \mapsto \exp_y(t\nu_y')
\end{equation}
has its image inside $U$. Since the local geometry of the submanifolds $F_i'$ of $\HH^d$ depends only on $d$, we see that the Jacobian of $E$ is uniformly bounded away from 0 (we prove this in detail in \ref{volext} at the end of the section); let $\eps(d) > 0$ be a lower bound. Moreover, the sets $E([0, 1] \times F_i')$ are pairwise disjoint (since a point in $E([0, 1] \times F_i')$ is at distance $\le 1$ from exactly 1 face of $\pl P$, which is $F_i$, and at distance $>1$ of all others). It follows that
\[
\vol_d\left(E([0, 1] \times \pl P' \setminus S)\right) \ge \eps(d)\cdot 1 \cdot \vol_{d-1}(\pl P' \setminus S) = \eps(d) \vol_{d-1}(\pl P')
\]
so that $\vol(U) \ge \eps(d) \vol_{d-1}(\pl P')$. We finish the proof of the theorem with the following chain of inequalities: 
\begin{align*}
  \vol_d(P) &= \vol_d(U) + \vol_d(P') \\
  &\le \vol_d(U) + \vol_{d-1}(\pl P') \\
  &\le \left(1 + \eps(d)^{-1}\right) \vol_d(U) \\
  &\le C(d)\vol_d(P_{\le 2}).
\end{align*}
where the second inequality follows from the isoperimetric inequality for hyperbolic space \cite[Proposition 6.6]{gromov_measure} which implies that $\vol_d(P') \le \vol_{d-1}(\pl P')$.


\subsection{Exponential map on equidistant sets} \label{volext}

Let $H$ be a geodesic hyperplane in $\HH^d$ and $H'$ a connected component of $\{x \in \HH^d :\: d(x, H) = 1\}$. Let $E : [0, 1] \times H' \to \HH^d$ be the map defined as in \eqref{expmap}. Since all hyperplanes and their equidistant sets are related by isometries, and exponential maps are equivariant with respect to those, our claim will follow if prove that the Jacobian $\det(DE(x, t))$ is uniformly bounded away from 0 for $x \in H', t \in [0, 1]$. 

The group $\isom(H)$ acts transitively on $H'$ and the map $E$ is equivariant with respect to this action. It follows that we need only to prove that $\det(DE(t, x))$ is uniformly bounded away from 0 for a fixed $x$ and $t \in [0, 1]$. This is immediate by compacity, since $DE(t, x)$ is invertible for all $t \in \RR$. 


\section{Lemmas on Coxeter polygons} \label{facts_polygons}

We collect here some preliminary facts about Coxeter polygons in $\HH^2$, and give complete proofs for all of them; though they are likely well-known it seems more convenient to give their (short) proofs than locate sufficiently precise references for them. First we have a consequence of the collar/Margulis lemma. 

\begin{lem} \label{smalledges}
  There exists $\eta > 0$ such that if $P$ is a Coxeter polygon in $\HH^2$ then:
  \begin{enumerate}
  \item\label{rightangle} if an edge of $P$ has length $\le \eta$ then its adjacent angles are right angles; 

  \item \label{2shortedges} no two adjacent edges of $P$ have both length $\le \eta$;

\item \label{nonadjvert} any two non-consecutive vertices of $P$ are at distance at least $\eta$. 
  \end{enumerate}
\end{lem}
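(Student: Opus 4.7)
The plan is to derive all three assertions from the Margulis lemma: there is a universal constant $\eta_0>0$ such that for any discrete subgroup $\Gamma\le\isom(\HH^2)$ and any $x\in\HH^2$, the subgroup generated by $\{g\in\Gamma:d(x,gx)\le\eta_0\}$ is elementary, meaning that it preserves a finite point, an ideal point, or a geodesic axis. In each of the three parts I will pick a base point $x$ and a small family of reflections in $\Gamma_P$ which either fix $x$ or move it by an amount controlled by $\eta$; choosing $\eta$ sufficiently small compared to $\eta_0$, the subgroup generated by these reflections is forced to be elementary. Throughout I will use (i) the convexity of $P$, which implies that two distinct edges never lie on the same geodesic of $\HH^2$, and (ii) the characterization that a reflection lies in a discrete elementary subgroup of $\isom(\HH^2)$ if and only if its reflecting line contains the common fixed point (finite or ideal), or is equal or perpendicular to the common axis in the loxodromic case.

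For (1), let $e$ be an edge of length $\ell\le\eta$ with endpoints $v_1,v_2$, let $r$ be the reflection in the line carrying $e$, and let $r_i$ be the reflection in the other edge at $v_i$. With $x$ the midpoint of $e$, one has $r(x)=x$ and the hyperbolic right-triangle formula gives $d(x,r_i(x))=2\sinh^{-1}(\sinh(\ell/2)\sin\alpha_i)\le\ell$, so $\langle r,r_1,r_2\rangle$ is elementary for $\eta$ small enough. Concurrence at a finite point would give $v_1=v_2$; a common ideal point is ruled out because $r\cap r_i=\{v_i\}\subset\HH^2$; and in the loxodromic case, a short analysis of which of the three lines can equal the common axis $\gamma$ (using convexity to rule out two distinct edges of $P$ being collinear) leaves only the configuration $r=\gamma$ with $r_i\perp\gamma$ at $v_i$, yielding $\alpha_1=\alpha_2=\pi/2$.

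Parts (2) and (3) follow by the same method applied to four reflections. For (2), with adjacent short edges $e,e'$ sharing a vertex $v$ and opposite endpoints $v_1,v_2$, take the four reflections in $e$, $e'$, and the edges opposite to $e,e'$ at $v_1,v_2$. For (3), with close non-consecutive vertices $v,w$, take the four reflections in the two edges at $v$ and the two at $w$. In both cases the base point $v$ is fixed by two of the four reflections and moved by at most $2\eta$ by the other two, so Margulis again forces the subgroup to be elementary. This time the trichotomy yields a contradiction rather than a geometric conclusion: concurrence would identify two distinct vertices, the parabolic case is excluded as in (1), and in the loxodromic case any consistent choice of common axis among the four reflecting lines would force two distinct edges of $P$ to be collinear, contradicting convexity. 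Taking the minimum of the three resulting constants gives a single $\eta$ valid for all assertions. The main obstacle is the bookkeeping in the loxodromic case; the key observation used throughout is that two distinct reflecting lines meeting at a point of $\HH^2$ can lie in a common loxodromic elementary subgroup only if they are perpendicular at that point with one of them being the axis.
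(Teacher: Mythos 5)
Your proof is correct, and it runs on the same engine as the paper's: the Margulis/collar lemma applied to a small collection of reflections that fix, or move by $O(\eta)$, a well-chosen base point, followed by the classification of elementary (virtually cyclic) discrete subgroups of $\isom(\HH^2)$. For parts (1) and (2) the two arguments are essentially identical in substance; the paper phrases the endgame in terms of which elliptic elements (rotations at the vertices) a virtually cyclic group can contain, while you phrase it in terms of which reflecting lines can coexist in an elementary group (through the common fixed point, ending at the common ideal point, or equal/perpendicular to the axis) --- these are two dialects of the same classification, and your line-by-line trichotomy is if anything cleaner than the paper's slightly loosely stated claim about finite-order elements. Where you genuinely diverge is part (3): the paper first runs a hyperbolic-trigonometry argument (the angle sum at $v$ and $w$ is at most $\pi$, so two of the four edges subtend small angles with the segment $vw$, and an area/length estimate forces their supporting half-lines to intersect), manufacturing a third non-collinear rotation center before invoking Margulis; you instead apply Margulis directly to the four reflections in the edges at $v$ and $w$ --- each line through $w$ moves $v$ by at most $2d(v,w)$ --- and kill all three elementary types by your configuration analysis, with convexity (no two distinct edges collinear) disposing of the loxodromic case. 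This bypasses the trigonometric detour entirely and is a real simplification. Two small points to mention in a polished write-up: in (2), when $P$ is a triangle the two ``outer'' edges coincide, so you only have three distinct reflections, but the same case analysis still yields a contradiction; and in (3) you may restrict to finite vertices, since a pair involving an ideal vertex satisfies the distance bound vacuously.
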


\begin{proof}
  Let $\Gamma = \Gamma_P$ be the discrete subgroup generated by the reflections $\sigma_e$ in the sides $e$ of $P$. Let $\delta$ be the constant given by the collar/Margulis lemma for $\HH^2$, so that for any $x \in \HH^2$ we have that
  \[
  \Gamma_x := \left\langle \gamma \in \Gamma : d(x, \gamma x) \le \delta \right\rangle
  \]
  is virtually cyclic. So if $e_2$ is an edge of $P$ with length $\le \delta/2$ and $e_1, e_3$ the adjacent edges of $P$ then the subgroup generated by $\sigma_{e_i}$ is virtually cyclic (as each of $\sigma_{e_i}$ moves any vertex of $e$ of less than $\delta$). On the other hand virtually cyclic subgroups of $\PGL_2(\RR)$ cannot contain an element of finite order other than $2$, and as this subgroup contains the rotations about the vertices of $e_2$, the angles between $e_1, e_1$ and $e_2, e_3$ must be right angles. This proves part \ref{rightangle} for any $\eta \le \delta/2$.

  For the second part we note that if there are two consecutive edges of length $\le \delta/4$ then the subgroup generated by the reflections in four consecutive edges is virtually cyclic. On the other hand it contains three reflections whose centers are not colinear, which is impossible if it is virtually cyclic. This proves part \ref{2shortedges} for any $\eta \le \delta/4$.

  To prove the last point let $v, w$ be two non-consecutive edges of $P$ and let $e'$ be the geodesic segment between $v$ and $w$. Since $P$ is a Coxeter polygon the sum of the angles of $P$ at $v$ and $w$ is at most $\pi$, so there are two edges $e_1, e_2$ of $P$ on the same side of $e'$ such that the sum of the angles between $e'$ and $e_1, e_2$ is at most $\pi/2$. Let $\beta, \gamma$ be these angles. Since the hyperbolic triangle with angles $\pi/4, \beta, \gamma$ has area at least $\pi/4$, at least one of its edges is of length $\ge d$ where $d$ is the smallest diameter of a hyperbolic disc of area $\ge\pi/4$. Either it is the side length $a$ opposite to the angle $\pi/4$, or we can assume that it is the side length $b$ opposite to the angle $\beta$ and in this case we have $\sinh(a) = \tfrac{2\sqrt 2 \sinh(b)}{\sin(\beta)} \ge 2\sqrt 2 \sinh(r)$. In any case we get that the side adjacent to the angles $\beta, \gamma$ in this triangle has length bounded below by a constant $a_0$ independent of $\beta, \gamma$. It follows that if $v, w$ are at distance less than $a_0$ then the half-lines supported by $e_1, e_2$ must intersect in $\HH^2$. It follows that the subgroup generated by the reflections in the edges of $P$ adjacent to $v, w$ contain at least 3 rotations with non-aligned center, so by Margulis lemma we must have $d(v, w) \ge \delta/2$. This proves that \ref{nonadjvert} holds for any $\eta \le \min(\delta/2, a_0)$. 
\end{proof}

Next we will need the following lemma on the area of hyperbolic triangles.

\begin{lem} \label{nosmallarea}
  For any $\ell_0$ there exists a constant $A > 0$ such that any hyperbolic triangle with edges of length at least $\ell_0$ and angles at most $\pi/2$ has area at least $A$. 
\end{lem}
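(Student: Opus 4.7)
The plan is to argue by contradiction using Gauss--Bonnet combined with the second (angular) hyperbolic law of cosines. Since the area of a hyperbolic triangle with angles $\alpha, \beta, \gamma$ equals $\pi - (\alpha + \beta + \gamma)$, the lemma will follow once one shows that the angle sum is uniformly bounded away from $\pi$ on the set of triangles satisfying the hypotheses.

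I would suppose for contradiction that there is a sequence of such triangles $T_n$ with all sides $\ge \ell_0$, all angles $\le \pi/2$, and areas $A_n := \pi - (\alpha_n + \beta_n + \gamma_n) \to 0$. Relabelling so that $\gamma_n \le \beta_n \le \alpha_n$, the first step is a purely combinatorial observation: the upper bound $\alpha_n \le \pi/2$ combined with $\alpha_n + \beta_n + \gamma_n \to \pi$ forces $\alpha_n \ge \pi/3 - A_n/3$ (since $\alpha_n$ is the largest) and $\beta_n \ge \pi/4 - A_n/2$ (since $2\beta_n \ge \beta_n + \gamma_n \ge \pi/2 - A_n$). In particular, for $n$ large both $\sin\alpha_n$ and $\sin\beta_n$, hence also their product, are bounded below by positive constants.

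The second step is to apply the second hyperbolic law of cosines to the side $c_n$ opposite $\gamma_n$, which reads
\[
\cosh c_n \;=\; \frac{\cos\alpha_n \cos\beta_n + \cos\gamma_n}{\sin\alpha_n \sin\beta_n}.
\]
Substituting $\gamma_n = \pi - \alpha_n - \beta_n - A_n$ and expanding $\cos\gamma_n = -\cos(\alpha_n+\beta_n)\cos A_n + \sin(\alpha_n+\beta_n)\sin A_n$, the numerator simplifies to $\sin\alpha_n \sin\beta_n + O(A_n)$ as $A_n \to 0$. Dividing by the denominator, which is bounded below, yields $\cosh c_n = 1 + O(A_n)$, hence $c_n \to 0$, contradicting $c_n \ge \ell_0$.

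The only delicate point is the lower bound on $\sin\alpha_n \sin\beta_n$, which is exactly what the constraint that no angle exceeds $\pi/2$ provides. Without this constraint one can build sequences with all sides $\ge \ell_0$ but area tending to $0$ by letting $\alpha_n \to \pi$ at an appropriate rate, which would drive $\sin\alpha_n \to 0$ and break the argument; so the angle bound is truly essential, not merely a convenience. Beyond this observation I do not anticipate any further obstacle.
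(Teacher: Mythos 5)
Your argument is correct, and it takes a genuinely different route from the paper's. The paper invokes the hyperbolic Heron formula (Mednykh), expressing $\sin(\area T)$ in terms of $\sinh$ of the quantities $s, s_a, s_b, s_c$; the hypotheses are used via the side law of cosines to bound these below by a positive constant, and elementary asymptotics of $\sinh$ and $\cosh$ then give a uniform lower bound on $\sin(\area T)$. You instead work entirely on the angle side: Gauss--Bonnet reduces the lemma to bounding the angle sum away from $\pi$; the ordering trick together with the hypothesis that no angle exceeds $\pi/2$ bounds $\sin\alpha_n\sin\beta_n$ below; and the second law of cosines, with the correct expansion $\cos\alpha_n\cos\beta_n + \cos\gamma_n = \sin\alpha_n\sin\beta_n + O(A_n)$, forces $\cosh c_n = 1 + O(A_n)$, so the side opposite the smallest angle shrinks to $0$, contradicting $c_n \ge \ell_0$. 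Your approach is more elementary (it uses only the dual law of cosines rather than a citation for the Heron formula) and it pinpoints exactly where the angle hypothesis is needed --- your closing example of thin triangles with one angle near $\pi$ is the right one. The paper's approach has the mild advantage of being a direct, sequence-free estimate that treats the three sides symmetrically. One small suggestion: your contradiction can be made quantitative at no cost, since $\cosh\ell_0 \le \cosh c \le 1 + C(\ell_0)A$ yields an explicit lower bound $A \ge (\cosh\ell_0 - 1)/C(\ell_0)$, which exhibits the uniform constant the lemma asserts.
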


\begin{proof}
  Let $a, b, c$ be the edge lengths of such a triangle $T$ and define
  \[
  s = \frac{a+b+c}2,\, s_a = s-a,\, s_b=s-b,\, s_c=s-c. 
  \]
  The hyperbolic Heron formula \cite[Theorem 1.1(i)]{Mednykh_brahmagupta} states that
  \[
  \sin(\area(T)) = \frac{\sqrt{\sinh(s)\sinh(s_a)\sinh(s_b)\sinh(s_c)}}{4\cosh(a/2)\cosh(b/2)\cosh(c/2)}.
  \]
  Using the hyperbolic cosine law it follows from our hypotheses on $T$ that there exists a constant $\ell_1 > 0$ (independent of $a, b, c$) such that $s, s_a, s_b, a_c \ge \ell_1$. It follows that we have $\sinh(s_a) \gg e^{s_a}$ and similarly for the other terms. As $s_a+s_b+s_c = 2s$ we get that
  \[
  \sinh(s)\sinh(s_a)\sinh(s_b)\sinh(s_c) \gg e^{3s}.
  \]
  Similarly,
  \[
  \cosh(a/2)\cosh(b/2)\cosh(c/2) \ll e^s
  \]
  so in the end
  \[
  \area(T) \ge \sin(\area(T)) \gg e^{s/2}
  \]
  which finishes the proof. 
\end{proof}

Finally we will use the following lemma. 

\begin{lem} \label{remove_small_edges}
  Let $\eps > 0$. There exists a constant $\eta' > 0$ such that for any Coxeter polygon $P$ in $\HH^2$ and any $R$ there exists a polygon $P'$ such that
  \begin{enumerate}
  \item \label{nosmalledges} $P'$ has no edge length smaller than $\eta'$ ;

  \item \label{volumechange} $\tfrac{\area \left(P_{\le R}'\right) }{\area P'} \le 2 \tfrac{\area \left(P_{\le R}\right) }{\area P}$.
  \end{enumerate}
\end{lem}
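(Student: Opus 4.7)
The plan is to construct $P'$ by locally truncating $P$ near each short edge. By Lemma~\ref{smalledges}, if $\eta' \le \eta$, then every short edge $e_i$ (of length $\ell_i \le \eta'$) has two right-angle neighbors $e_{i-1}, e_{i+1}$ of length at least $\eta$ (part \ref{rightangle} and part \ref{2shortedges}), and no two short edges are adjacent. This gives a clean local picture near each short edge: a rectangular inlet bounded by $e_i$ and the perpendicular segments $e_{i\pm 1}$.

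The basic operation is a Saccheri truncation. Consider the Saccheri quadrilateral $Q_i \subset P$ with base $e_i$, legs of length $d_i$ running from $v_{i-1}, v_i$ along $e_{i-1}, e_{i+1}$, and summit $s_i$ a geodesic chord. Standard hyperbolic trigonometry gives $\sinh(s_i/2) = \sinh(\ell_i/2)\cosh(d_i)$, so one can pick $d_i = \cosh^{-1}(\sinh(\eta'/2)/\sinh(\ell_i/2))$ to make $s_i = \eta'$. Replacing the chain $e_{i-1}, e_i, e_{i+1}$ restricted to $Q_i$ by the summit alone removes the short edge, and by Gauss--Bonnet for $Q_i$ the summit angles are acute, so no new right-angle pathology appears. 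Since short edges are non-adjacent, the surgeries at different short edges commute and their composition produces a polygon $P'$ with no edge shorter than $\eta'$.

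For the ratio bound, each truncation removes an area $\pi - 2\alpha_i < \pi$ (Gauss--Bonnet) and adds one boundary segment of length exactly $\eta'$. The contribution of $Q_i$ to $P_{\le R}$ is at most $\area(Q_i)$, and the thin part newly created near each summit is bounded by a quantity of order $\eta'\sinh(R/2)$. Summing over all short edges and invoking the bound $n \le 2\area(P)/\pi + 4$ on the number of edges (which follows from the Coxeter angle constraint $\alpha \le \pi/2$), both $\area(P')$ and $\area(P'_{\le R})$ differ from the unprimed versions by amounts controlled in terms of $\eta'$; choosing $\eta'$ small enough in terms of $\eps$ then yields the factor-$2$ bound on the ratio.

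The main obstacle is the regime $\ell_i \ll \eta'$: here the required depth $d_i \approx \log(\eta'/\ell_i)$ can exceed $\eta$, so the naive truncation would cut past $v_{i\mp 2}$, and simultaneously $\area(Q_i) \to \pi$. To handle this, I would preprocess very short edges by doubling: replace $P$ by $P \cup \sigma_{e_{i-1}}(P)$, which removes $e_{i-1}$ from the boundary and, because $e_i \perp e_{i-1}$, extends $e_i$ into a collinear merged edge of length $2\ell_i$; this operation doubles both $\area$ and, up to corrections from points close to $e_{i-1}$, $\area_{\le R}$, hence preserves the ratio. Iterating a logarithmic number of times on each tiny short edge brings $\ell_i$ above the threshold where the truncation step becomes feasible, and the total accumulated correction is controlled by summing the per-step errors.
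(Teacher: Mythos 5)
Your surgery is genuinely different from the paper's, and as written it has two gaps that I do not see how to close. The first concerns the depth of the cut. Even after your doubling preprocessing the short edge only has length $\ell_i$ somewhere in $[\eta'/2,\eta')$, so the leg length needed to bring the summit up to $\eta'$ is $d_i=\cosh^{-1}\bigl(\sinh(\eta'/2)/\sinh(\ell_i/2)\bigr)$, which in the worst case tends to $\cosh^{-1}(2)\approx 1.32$ as $\eta'\to 0$: the new boundary edge sits a \emph{fixed} distance inside the old one, no matter how small you take $\eta'$. The best inclusion available is then $P'_{\le R}\subset P_{\le R+2d_{\max}}$ with $d_{\max}$ bounded away from $0$, and $\area(P_{\le R+2d_{\max}})/\area(P_{\le R})$ is roughly $e^{d_{\max}}>2$ for moderate $R$ (and blows up as $R\to 0$). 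Your alternative estimate of the new thin part as order $\eta'\sinh(R/2)$ per summit, summed over up to $O(\area(P))$ summits, has to be compared against $\area(P_{\le R})$, and making that comparison work forces $\eta'\lesssim e^{-R/2}$ --- but the lemma requires a single $\eta'$ valid for all $R$. The second gap is the preprocessing itself: $\sigma_{e_{i-1}}$ reflects the \emph{whole} polygon, so one doubling step merges $e_i$ with its mirror image but duplicates every other short edge of $P$, each copy still needing its full quota of doublings. With two or more very short edges the total number of pending doublings strictly increases at every step, so the iteration never terminates. (A smaller further issue: since the summit angles $\alpha_i$ are acute, $P'$ acquires interior angles $\pi-\alpha_i>\pi/2$ at the new vertices; the lemma statement tolerates this, but it would break the use of Lemma \ref{nosmallarea} on triangulations of $P'$ in Section \ref{proof_nontrivial}.)

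The paper's cut avoids both problems: for each short edge $v_iv_{i+1}$ it simply deletes the vertex $v_{i+1}$, replacing the two edges $v_iv_{i+1}$ and $v_{i+1}v_{i+2}$ by the chord $v_iv_{i+2}$. Because the neighbouring edge is long (Lemma \ref{smalledges}, part \eqref{2shortedges}), the new edge automatically has length at least $(1-\alpha)\eta$ --- it inherits its length from the long neighbour instead of from a deep parallel cut --- and it stays within Hausdorff distance $\alpha\eta=\eta'$ of the old boundary, so $P'_{\le R}$ lies in an $O(\eta')$-neighbourhood of $P_{\le R}$ uniformly in $R$. The removed region is a thin right triangle of area $O(\alpha)$, which is $O(\alpha)$ times the area of a disjoint companion triangle of definite area by Lemma \ref{nosmallarea}, so $\area(P')=(1-O(\alpha))\area(P)$ with no need to count edges. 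If you want to keep the truncation flavour, the lesson is that the new edge must be obtained by perturbing a \emph{long} piece of the boundary, not by pushing the short edge inward until it becomes long.
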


\begin{proof}
  We construct $P'$ as follows : let $v_1, \ldots, v_m$ be a cyclic ordering of the vertices of $P$. Let $\eta$ be the constant from Lemma \ref{smalledges}, and $0 < \alpha < 1/2$ (to be determined later). For each pair $(v_i, v_{i+1})$ of adjacent vertices such that $d(v_i, v_{i+1}) \le \alpha \cdot \eta$ we remove the vertex $v_{i+1}$ from $P$, that is, if $i_1, \ldots, i_k$ are those indices such that $d(v_{i_j}, v_{i_j+1}) \le \alpha \cdot \eta$ we take $P'$ to be the polygon spanned by the $v_i, i \not\in \{i_1, \ldots, i_k\}$. Since $d(v_{i_j+1}, v_{i_j+2}) > \eta$ by Lemma \ref{smalledges}, it follows from the triangle inequality that $d(v_{i_j}, v_{i_j+2}) > (1-\alpha)\eta > \alpha\eta$ and so $P'$ satifies condition \ref{nosmalledges} for any $\eta' \le \alpha\eta$. 

\medskip
  
  We now prove that $P'$ satisfies condition \ref{volumechange} for sufficiently small $\alpha$. First we estimate the area of each removed triangle. To do this let $i$ such that $d(v_i, v_{i+1}) < \alpha\eta$; we want to estimate the area of the triangle $T_i$ spanned by $v_i, v_{i+1}, v_{i+2}$. Let $\gamma$ be its angle at $v_i$, $c = d(v_{i+1}, v_{i+2})$, $a = d(v_i, v_{i+2})$; by the hyperbolic sine law we have that $\sinh(a) = \tfrac{\sinh(c)}{\sin(\gamma)}$. We compute that
  \begin{align*}
    \sin(\gamma) &\ge \frac{\sinh(a-\eta)}{\sinh(a)} \\
    &\ge \frac{\sinh(a) - \eta\cosh(a)}{\sinh(a)} \ge 1 - u\cdot\alpha
  \end{align*}
  where $u > 0$ on the last line depends only on $\eta$. It follows that $\gamma \ge \tfrac \pi 2 - u' \cdot \alpha$, and finally that 
  \begin{equation}\label{smallareatrig}
    \area(T_i) \le u' \cdot \alpha 
  \end{equation}
  for some $u'$ independent of $P, \alpha$. 

  Now the triangle $S_i$ spanned by $v_{i-1}, v_i, v_{i+2}$ has all its edge length at least $\eta$, by Lemma \ref{smalledges}. Its angles are at most $\pi/2$ since it is inscribed in the Coxeter polygon $P$. So by Lemma \ref{nosmallarea} we have that $\area(S_i) \ge A$. By \eqref{smallareatrig} this implies that $\area(T_i) = O(\area(S_i)\alpha)$ uniformly in $i, P$, and as $\area(P')  = \area(P) - \sum_{l=1}^k \area(T_{i_l})$ it follows that
  \[
  \area(P') = (1-O(\alpha))\area(P).
  \]
  On the other hand, we have that $P_{\le R}'$ is contained in the $\alpha\eta$-neighbourhood of $P_{\le R}$ so that
  \[
  \area((P')_{\le R} \le (1+O(\alpha))\area(P_{\le R}).
  \]
  From these two inequalities it follows that
  \[
  \frac{\area((P')_{\le R})}{\area(P')} \le (1+O(\alpha)) \frac{\area(P_{\le R})}{\area(P)}
  \]
  which finishes the proof by taking $\eta' = \alpha\eta$ for $\alpha$ small enough (independently of $P, R$).   

\end{proof}


\section{Nontriviality of BS-limits of polygons} \label{proof_nontrivial}

In this section we give the proof of the first part of Theorem \ref{BSconv-polygons}, that a Benjamini--Schramm limit of a sequence of Coxeter polygons is almost surely nontrivial. We will prove that for any sequence $P_n$ of Coxeter polygons in $\HH^2$ we have
\begin{equation} \label{geom_BSlim_polygon}
  \lim_{R \to +\infty} \lim_{n \to +\infty} \frac{\area \left((P_n)_{\ge R}\right)} {\vol P_n} = 0. 
\end{equation}
from which the first statement follows immediately. 

\medskip

Let $P_n'$ be the polygons obtained from the $P_n$ by applying Lemma \ref{remove_small_edges}; it follows from the condition \eqref{nosmalledges} that it suffices to prove \eqref{geom_BSlim_polygon} for the $P_n'$. Note that in any triangulation\footnote{By this we mean a decomposition of a polygon into triangles whose vertices are vertices of the polygon. } of $P_n'$ all angles are smaller than $\pi/2$ (they are smaller than those of the $P_n$ and the latter are of the form $\pi/k$, $k \ge 2$), and by using point \eqref{nonadjvert} of Lemma \ref{smalledges} in addition we see that all edges in the triangulation have length at least $\eta$, so by Lemma \ref{nosmallarea} we have a uniform lower bound for all areas of triangles occuring in any triangulation of any $P_n'$. 

We triangulate $P_n'$ as follows : we choose a vertex, and as long as possible add and edge between the current vertex and the second-to-next one (clockwise). See Figure \ref{triangulation} for an illustration. 

\begin{figure}
    \centering
    \begin{subfigure}[b]{0.4\textwidth}
        \includegraphics[width=\textwidth]{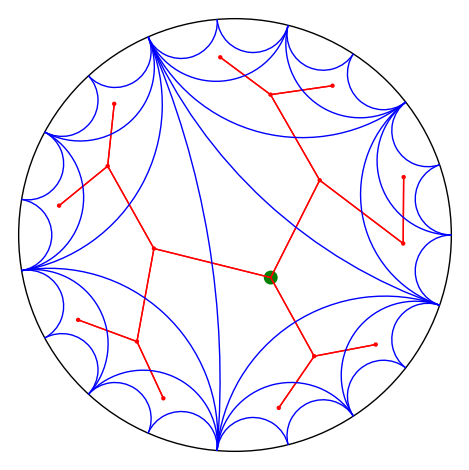}
        \caption{$n=19$}
    \end{subfigure}
    \hspace{1cm}
    \begin{subfigure}[b]{0.4\textwidth}
        \includegraphics[width=\textwidth]{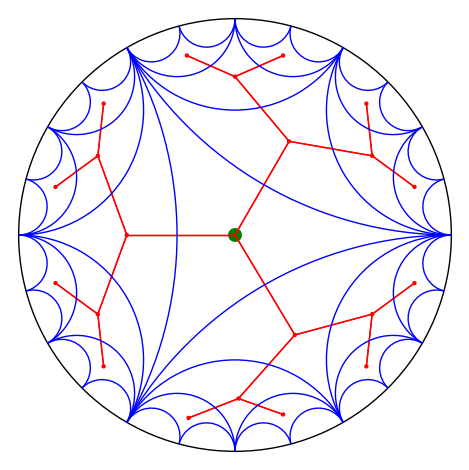}
        \caption{$n=24$}
    \end{subfigure}
    \caption{Triangulations of ideal $n$-gons and their dual tree}
    \label{triangulation}
\end{figure}

The tree $T_n$ dual to this triangulation, rooted at the last triangle, has radius at most $\log_2(n) + 1$ by \cite[Proposition 2.1]{Korman}. Moreover the distance of the closest leaf to this triangle is at least $\log_2(n) - 1$ as well\footnote{This can easily be seen by observing that these functions are monotonic, and for a $2^k$-gon thay are equal to $\log_2(n) \pm 1$ (it would be more natural in this case to center at the edge between the last two triangles). }.

Let $S > 0$. Let $(T_n)_{\le S}$ be the set of vertices in $T_n$ which are at distance at most $S$ from $\partial T_n$\footnote{For us the boundary of a finite tree is its set of leaves. }. Then $T_n \setminus (T_n)_{\le S}$ is contained in the ball of radius $\log_2(n) + 1 - S$ since every leaf is at distance at most $\log_2(n) + 1$ from the root. It follows that $|T_n \setminus (T_n)_{\le S}| \le 2^{\log_2(n) + 1 - S} = O(2^{-S}|T_n|$ since $|T_n| = n$. So we have 
\begin{equation} \label{boundary_trig}
  |(T_n)_{\le S}| \ge (1 - O(2^{-S})) |T_n| .
\end{equation}
On the other hand, if $x \in P_n'$ lies in a triangle corresponding to a triangle $t \in (T_n)_{\le S}$ then we may construct a path $c$ as follows: let $t=t_0$; choose a side of $t_0$ closest to $x$ such that the corresponding edge of $T_n$ points away from the root, and let $c_0$ be the geodesic from $x$ to this side. If the side lies on the boundary let $c=c_0$; otherwise let $t_1$ be the triangle on the other side, let $x_1$ be the foot of $c_0$ on the side $t_0\cap t_1$ and iterate the construction until the boundary is reached, say in $l$ steps, and let $c$ be the concatenation of $c_0, \ldots, c_l$ where $c_i$ is the path obtained at the $(i+1)$th step. As $t \in (T_n)_{\le S}$ and the edges $(t_i, t_{i+1})$ point away from the root we must have that $l \le S+1$. Moreover, the length of each $c_i$ is at most $\log(3)$ since $\HH^2$ is $\log(3)/2$-hyperbolic so at each step at least one of the two edges pointing away from the root is at distance $\le \log(3/2)$ from the foot of $c_i$. It follows that $c$ has length at most $(S+1)\log(3)$ and we conclude that $x$ is at distance at most $\log(3) \cdot (S+1)$ from the boundary. From this and \eqref{boundary_trig} we deduce that at least $(1 - O(2^{-S})) |T_n|$ triangles of $T_n$ lie entirely in $(P_n')_{\le R}$, for $S = R/\log(3)$. If $A$ is a lower bound for the area of triangles in $T_n$ (which is independent of $n$ by the remarks above) we thus have that
\[
\area(P_n') \ge (1 - O(2^{-R}) A.
\]
On the other hand we have that
\[
\area(P_n' \setminus (P_n')_{\le R}) \le 2^{\log_2(n) + 1 - S} \pi
\]
so that 
\[
\area((P_n')_{\le R}) \ge (1 - O(2^{-R}) \area(P_n').
\]
from which \eqref{geom_BSlim_polygon} follows immediately. 


\section{Chabauty limits of Coxeter polygons} \label{proof_reflection}

In this section we prove the following result, which immediately implies the second part of Theorem \ref{BSconv-polygons} since limits of discrete invariant random subgroups are themselves supported on discrete subgroups (as follows from \cite[Proposition 2.2, Theorem 2.9]{7s}). 

\begin{prop} \label{coxeter_closed}
  The set of discrete groups generated by reflections is closed in the Chabauty space of discrete subgroups of $\PO(2, 1)$. 
\end{prop}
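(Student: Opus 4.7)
The plan is to show that any Chabauty limit $\Gamma$ --- within the space of discrete subgroups of $G = \PO(2,1)$ --- of a sequence of discrete reflection groups $\Gamma_n$ is itself generated by reflections. Writing $R(H) := H \cap R$ for a subgroup $H$, with $R \subset G$ the set of reflections, the goal is to show $\Gamma = \langle R(\Gamma)\rangle$. The starting point is the observation that in dimension two reflections coincide with orientation-reversing involutions, so $R = \{g \in G : g^2 = e,\ \det g = -1\}$ is closed in $G$; consequently any subsequential limit in $G$ of elements of $\bigcup_n R(\Gamma_n)$ belongs to $R(\Gamma) \cup \{e\}$ by Chabauty.

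The first substantial step is to show the converse direction: every $r \in R(\Gamma)$ is the limit of some sequence $r_n \in R(\Gamma_n)$. Take $\gamma_n \in \Gamma_n$ with $\gamma_n \to r$; eventually $\gamma_n$ is orientation-reversing, so either a reflection or a glide reflection. If $\gamma_n$ were a glide reflection with translation length $\ell_n > 0$, then $\gamma_n^2$ would be a translation of length $2\ell_n \to 0$ whose axis converges to the mirror of $r$; taking appropriate iterates $\gamma_n^{2k_n}$ and varying the scale would produce a continuum of non-trivial translations in $\Gamma$ of arbitrarily small displacement along this limit mirror, contradicting the discreteness of $\Gamma$. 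Hence $\gamma_n$ is a reflection for large $n$.

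Next I would prove $\Gamma \subseteq \langle R(\Gamma)\rangle$ by a case analysis on $\gamma \in \Gamma$. The identity and reflection cases are immediate. For $\gamma \in \Gamma$ elliptic of finite order $k \ge 2$, pick $\gamma_n \in \Gamma_n$ with $\gamma_n \to \gamma$; a discreteness argument (examining $\gamma_n^k \to e$ and $\gamma_n^d \to \gamma^d \ne e$ for proper divisors $d$ of $k$) forces $\gamma_n$ to have order exactly $k$ for large $n$. Since $\Gamma_n$ is a reflection group and $\gamma_n$ is a finite-order rotation with fixed point $p_n$, two mirrors of $\Gamma_n$ meet at $p_n$ at angle $\pi/k$, giving $\gamma_n = s_{n,1}s_{n,2}$ with $s_{n,i} \in R(\Gamma_n)$; since $p_n \to p$, these reflections stay in a compact subset of $G$ and subsequentially converge to reflections $s_i \in R(\Gamma)$ satisfying $s_1 s_2 = \gamma$.

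The main obstacle is the case where $\gamma$ is hyperbolic, parabolic, or a glide reflection: there is no interior fixed point providing a canonical bounded reflection decomposition, and the minimal reflection word-length of approximants $\gamma_n$ in $\Gamma_n$ need not be uniformly bounded in $n$ a priori. To handle these cases I would exploit the reflection-group structure of $\Gamma_n$ through its Coxeter fundamental polygon and use the discreteness of $\Gamma$ to control how the tessellations of $\Gamma_n$ may degenerate; the discreteness should force, after suitable choices, any minimal reflection decomposition $\gamma_n = s_{n,1}\cdots s_{n,k_n}$ to stay --- after a subsequential extraction and a uniform bound on $k_n$ coming from a chamber-counting estimate in the $\Gamma_n$-tessellation --- in a compact region of $G$, permitting passage to the limit. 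Carrying this through case by case via the classification of isometries of $\HH^2$ is where I expect the main technical work.
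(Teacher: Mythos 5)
Your proposal is not complete: the case analysis front-loads the easy cases (identity, reflections, elliptic elements, all of which have a fixed point or mirror that pins the approximating data to a compact set), but the generic case --- $\gamma$ hyperbolic, parabolic or a glide reflection --- is exactly where the content of the proposition lies, and there you only name the two claims you would need (a uniform bound on the reflection word length of the approximants $\gamma_n$, and compactness of the set of factors in a minimal decomposition) without proving either. The second claim is the real difficulty and it does not follow from the first: even if $\gamma_n = s_{n,1}\cdots s_{n,k}$ with $k$ bounded, the individual mirrors could a priori run off to infinity while the product stays bounded, so ``a compact region of $G$'' is not something discreteness of the limit hands you for free.

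The paper closes both gaps with two specific inputs that your sketch is missing. First, since the limit $H$ is discrete, the Kazhdan--Margulis theorem provides a single point $o$ lying in the $\eps$-thick part of $\Gamma_n\backslash\HH^2$ for \emph{all} $n$; a packing argument then bounds the number of orbit points $\Gamma_n\cdot o$ within distance $R$ of $o$, hence the word length of $g_n$ in the side reflections of the chamber $P_n$ containing $o$. Second, and crucially, Lemma \ref{gens_minlength} shows that every reflection $s$ occurring in a \emph{minimal} expression for $w$ satisfies $d(o,so)\le d(o,wo)$: the wall of $s$ separates $o$ from $wo$, and the nearest $\Gamma_n$-translate of that wall to $o$ is the wall itself (a billiard/orthogonal-projection argument). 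This is the step that forces each factor $s_{i,n}$ to move $o$ by at most $R$, hence to lie in a fixed compact subset of $G$, after which one extracts convergent subsequences. Note also that this argument is uniform in the isometry type of $g$, so the classification-based case analysis you propose becomes unnecessary; your elliptic case would additionally need the (standard, but unstated) fact that point stabilizers in a reflection group are generated by the reflections they contain.
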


This will follow from the next lemma.

\begin{lem} \label{gens_minlength}
  Let $P$ be a Coxeter polygon in $\HH^2$, $S$ the set of reflections in its faces and $\Gamma = \Gamma_P = \langle S \rangle$. If $w \in \Gamma$ and $T$ is the subset of $S$ containing all elements occuring in a minimal expression for $w$ as a word in the elements of $S$ then $d(x, wx) \ge d(x, sx)$ for all $s \in T$. 
\end{lem}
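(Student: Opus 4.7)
My plan is to argue by induction on $k = \ell(w)$, with trivial base case $k = 1$. For the inductive step I reduce to $x \in P^\circ$ by $\Gamma$-equivariance. The central geometric input is that the geodesic segment $[x, wx]$ crosses exactly $k$ walls of the Coxeter arrangement in $\HH^2$, and reading off their $S$-labels yields a reduced expression $w = t_1 \cdots t_k$ whose underlying set equals $T$ by Tits' theorem on reduced expressions. In particular the first wall crossed $H_{t_1}$ is a wall of $P$ itself, so $t_1$ is a left descent of $w$; write $w = t_1 w'$ with $\ell(w') = k - 1$, and note the supports satisfy $T(w') \bigtriangleup T \subseteq \{t_1\}$.

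The inductive step hinges on the following elementary hyperbolic fact, verified by a short Fermi-coordinate computation along $H$: if $p, q \in \HH^2$ lie on opposite sides of a geodesic $H$ and $p'$ is the orthogonal reflection of $p$ across $H$, then $d(p, q) \ge d(p', q)$ (indeed $\cosh d(p,q) - \cosh d(p',q) = -2\sinh v_p \sinh v_q > 0$ when the signed Fermi coordinates $v_p, v_q$ have opposite signs). Applied with $H = H_{t_1}$, $p = x$, $q = wx$, $p' = t_1 x$, this gives
\[
d(x, wx) \;\ge\; d(t_1 x, wx) \;=\; d(x, t_1 w x) \;=\; d(x, w' x),
\]
so for any $s \in T$ that also lies in $T(w')$ the inductive hypothesis applied to $w'$ closes the case.

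The only remaining subcase is $s = t_1$ with $t_1 \notin T(w')$, i.e.\ $w' \in \langle S \setminus \{t_1\}\rangle$. Here I would exploit the acuteness of Coxeter polygons (all dihedral angles $\pi/m \le \pi/2$, so non-adjacent walls of $P$ are ultraparallel by Andreev-type rigidity): the parabolic subgroup $\langle S \setminus \{t_1\}\rangle$ preserves the $P$-side of $H_{t_1}$, so $w' P$ lies on the $P$-side of $H_{t_1}$ and $wP = t_1 w' P$ lies on the opposite side. A direct hyperbolic estimate, using that the product of two reflections in ultraparallel walls is a translation along the common perpendicular of length twice their distance, combined with the fact that $x \in P$ sits in the strip bounded by the walls involved, should yield $d(x, wx) \ge 2 d(x, H_{t_1}) = d(x, t_1 x)$.

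The main obstacle is precisely this last subcase: the naive reflection inequality $d(x, wx) \ge d(x, w'x)$ is not sufficient, since $w'x$ can lie arbitrarily close to $x$ on the $P$-side of $H_{t_1}$. Making the direct argument work requires combining the global structure of the $\langle S \setminus \{t_1\}\rangle$-orbit of $P$ with the hyperbolic trigonometry of ultraparallel reflection pairs, and this is where the acuteness of Coxeter polygons enters essentially.
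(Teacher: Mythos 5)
Your inductive reduction is sound and genuinely different in shape from the paper's argument: the reflection inequality gives $d(x,wx)\ge d(x,w'x)$ for $w=t_1w'$ with $t_1$ the label of the first wall crossed, and the induction hypothesis then disposes of every $s\in T(w')$. But the proof is not complete: the remaining subcase $s=t_1$ with $w'\in\langle S\setminus\{t_1\}\rangle$, which you yourself identify as the main obstacle, is only sketched, and the sketch does not close it. What that subcase actually requires is the inequality $d(w'x,W_{t_1})\ge d(x,W_{t_1})$ for every $w'$ in the parabolic subgroup: since $W_{t_1}$ separates $x$ from $wx=t_1w'x$, one gets $d(x,wx)\ge d(x,W_{t_1})+d(wx,W_{t_1})=d(x,W_{t_1})+d(w'x,W_{t_1})$, and the desired bound $d(x,wx)\ge 2d(x,W_{t_1})=d(x,t_1x)$ follows exactly when $w'$ does not move $x$ closer to $W_{t_1}$. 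Your proposed tool --- the translation length $2\,d(W_{t_1},W_r)$ of a product of reflections in ultraparallel walls --- at best addresses $w'$ equal to a single reflection $r$ with $W_r$ ultraparallel to $W_{t_1}$ (walls of reflections in $\langle S\setminus\{t_1\}\rangle$ can perfectly well meet $W_{t_1}$, e.g.\ for $w'$ a generator adjacent to $t_1$), and even then the translation length alone is not $\ge 2d(x,W_{t_1})$, since $x$ may be far from the common perpendicular; for a general word $w'$ in the parabolic no argument is offered at all.

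The missing statement --- that no element of $\Gamma$ brings $x$ strictly closer to a $\Gamma$-translate of $W_s$ than the orthogonal distance $d(x,W_s)$ --- is precisely the key claim in the paper's proof, which establishes it by folding a hypothetical shorter geodesic into a billiard trajectory in $P$ from $x$ to the side supported by $W_s$ and observing that no path from $x$ to $W_s$ can be shorter than the orthogonal segment. The paper needs no induction: it invokes the standard Coxeter fact that $s\in T$ forces some $\Gamma$-translate of $W_s$ to separate $x$ from $wx$, and then applies this claim to both endpoints. So your induction does not circumvent the geometric heart of the lemma; it isolates it and leaves it unproved. To complete your argument you would need to supply essentially the paper's unfolding step (or an equivalent statement about the orbit of $x$ under $\langle S\setminus\{t_1\}\rangle$ relative to $W_{t_1}$) in the final subcase.
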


\begin{proof}
  Assume that $s \in S$ occurs in a minimal expression for $w$; then $x$ and $wx$ are separated by an hyperplane which is $\Gamma$-equivalent to the hyperplane $W_s$ supporting the side of $P$ corresponding to $s$. Thus we need only show the following statement: the smallest distance between two points in the orbit $\Gamma \cdot x$ separated by $W_s$ is realised by $(x, sx)$.

  In turn this is implied by the statement that the closest point to $x$ on a $\Gamma$-translate of $W_s$ is its projection on $W_s$. If this were not the case there would be a billiard trajectory in $P$ starting at $x$ and ending on $W_s$ shorter than the segment from $x$ orthogonal to $W_s$. This is impossible: indeed, there is no trajectory from $x$ to $W_s$ at all which is shorter than this segment. This finishes the proof. 
\end{proof}

\begin{proof}[Proof of Proposition \ref{coxeter_closed}]
  Let $H$ be a discrete Chabauty limit of a sequence $\Gamma_n$ of Coxeter groups in $\HH^2$. Since $H$ is discrete, it follows from the Kazhdan--Margulis theorem there exists $\eps > 0$ and a point $o$ which is contained in the $\eps$-thick part of $\Gamma_n \bs \HH^2$ for every $n$. Let $P_n$ be the Coxeter polygon of $\Gamma_n$ containing $o$.

  Fix a $g \in H$: we want to prove that $g$ is a product of reflections in $H$. We know that $g$ is a limit of a sequence $g_n \in \Gamma_n$. The distances $d(o, g_n o)$ are uniformly bounded, say $d(o, g_n o) \le R$ for all $n$, and it follows that the word length of $g_n$ in the reflections in the sides of $P_n$ must be bounded, say by some $l \in \mathbb N$ (since there are uniformly finitely many points in the orbit $\Gamma_n \cdot o$ at distance at most $R$ from $o$ by a packing argument). By Lemma \ref{gens_minlength} it follows that we have $g_n = s_{i_1, n} \cdots s_{i_{l_n}, n}$ where $s_{i, n}$ are reflections in the sides of $P_n$ such that $d(o, s_{i, n} o) \le R$ and $l_n \le l$ for all $n$. We can thus pass to a subsequence and assume that the sequences $s_{i, n}$, $1 \le i \le l$ are convergent. The limits are reflections belonging to $H$ and $g$ can be written as a product of them. 
\end{proof}

\bibliographystyle{alpha}
\bibliography{bib}

\end{document}